\documentclass{article}  
\usepackage{graphicx} 
\usepackage{amssymb, latexsym, amsmath, amsthm} 
             
\usepackage{textcomp}
\usepackage[numbers]{natbib}
\usepackage[dvips=true,bookmarks=true]{hyperref} 
\usepackage{breakurl}
\usepackage{booktabs}
\usepackage{longtable}

\DeclareGraphicsExtensions{.pdf,.eps}
\usepackage{psfrag} 
\DeclareGraphicsRule{.tif}{png}{.png}{`convert #1 `basename #1 .tif`.png}

\clubpenalty = 10000 
\widowpenalty = 10000

\newtheorem{thm}{Theorem}[section]

\newtheorem{lem}[thm]{Lemma}

\theoremstyle{definition}
\newtheorem{def.}[thm]{Definition}

\newtheorem*{rem}{Remark} 

\def \Z {\mathbb{Z}}
\def \D {\mathbb{D}}
\def \N {\mathbb{N}}
\begin{document}

\title{Quasi-alternating Montesinos links}
\author{Tamara Widmer\\ Institut f\"ur Mathematik\\ Universit\"at Z\"urich}
\date{}
\maketitle
\begin{abstract}
The aim of this article is to detect new classes of quasi-alternating links. Quasi-alternating links are a natural generalization of alternating links. Their knot Floer and Khovanov homology are particularly easy to compute. Since knot Floer homology detects the genus of a knot  as well as whether a knot is fibered, as provided bounds on unknotting number and slice genus, characterization of quasi-alternating links becomes an interesting open problem. We show that there exist classes of non-alternating Montesinos links, which are quasi-alternating.
\end{abstract}

\section{Introduction}

Quasi-alternating links were introduced by  \citet*{Ozsvath:2003}. It was shown in  \cite{Manolescu:2007} that their knot Floer homology can be computed explicitly and depends only on the signature and the Alexander polynomial of the knot. More precisely it was shown that quasi-alternating links are homologically thin for both Khovanov homology and knot Floer homology. The definition is given in a recursive way:
\begin{def.}[ \cite{Ozsvath:2003}] \label{defqalt}
The set $\mathcal{Q}$ of \emph{quasi-alternating links} is the smallest set of links which satisfies the following properties:
\begin{itemize}
\item {The unknot is in $\mathcal{Q}$.}
\item {If the link $L$ has a diagram with a crossing $c$ such that} 
\begin{itemize}
\item[(i)] both smoothings of $c$, $L_0$ and $L_{\infty}$ as in Figure~\ref{F:skein}, are in $\mathcal{Q}$,
\item[(ii)] $\det(L_0), \det(L_\infty)\ne 0$,
\item[(iii)] $\det(L)=\det(L_0)+\det(L_{\infty})$;
\end{itemize}
then $L$ is in $\mathcal{Q}$.
The crossing $c$ is called a \emph{quasi-alternating crossing} of $L$ and $L$ is called \emph{quasi-alternating at $c$}.
\end{itemize}
\end{def.}

\begin{figure}[htbp]
	 \begin{center}
 	\psfrag{L0}[]{$L_0$}
 	\psfrag{L}[]{$L$}
 	\psfrag{Linf}[]{$L_\infty$}
 	\includegraphics[height=1.5cm]{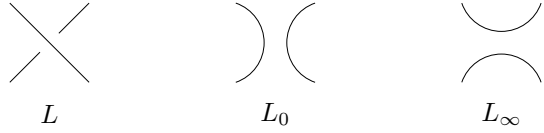}
	\end{center}
	 \caption{The link L at crossing $c$ and its resolutions $L_0$ and $L_{\infty}$.}
	 \label{F:skein}
\end{figure}

The class of quasi-alternating links contains all alternating links \cite{Ozsvath:2003}. It was shown by \citet*{Champanerkar:2007} that the sum of two quasi-alternating links is quasi-alternating and that a quasi-alternating crossing can be replaced by an alternating rational tangle to obtain another quasi-alternating link. Moreover they applied this result to show that there exist a family of pretzel links which is quasi-alternating.
\begin{figure}[htbp]
 \begin{center}
 \psfrag{R1}[]{$R_1$}
 \psfrag{R2}[]{$R_2$}
 \psfrag{Rm}[]{$R_m$}
 \psfrag{k}[]{$k$}
 \psfrag{an-1}[]{{\scriptsize $a_{n-1}$}}
 \psfrag{an}[]{{\scriptsize $a_n$}}
 \includegraphics[height=2.4cm]{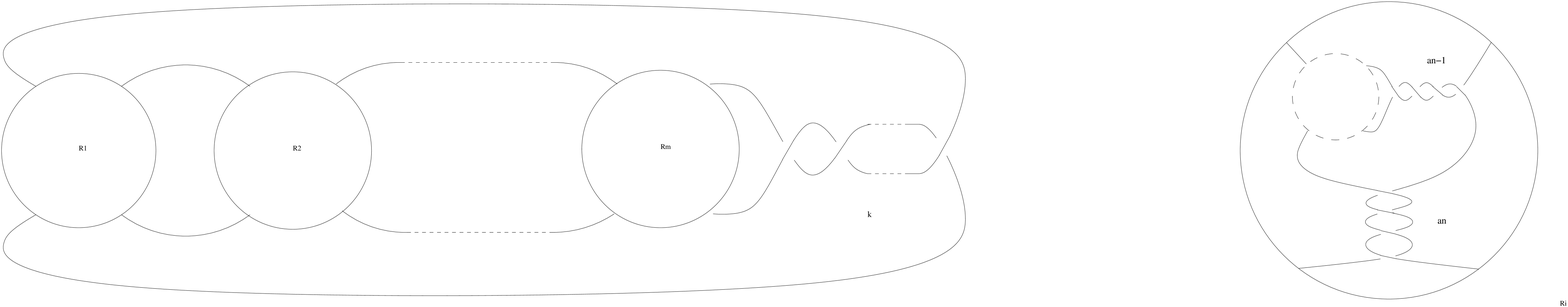}
\end{center}
 \caption{Montesinos link.}
 \label{F:montesinos}
 \end{figure}
We will apply their method to Montesinos links. A Montesinos link admits a diagram $D$ composed of $m\in\N$ rational tangle diagrams $R_1,R_2,\ldots,R_m$ and $k\in\N_0$ half-twists put together as in Figure~\ref{F:montesinos}. We will denote such a link by $L(R_1,R_2,\ldots,R_m;k)$. The rational tangles can be obtained from a sequence of non-zero integers $a_1,a_2,\ldots,a_n$ as indicated in Figure~\ref{F:montesinos}, and they are denoted by $R=a_1a_2\cdots a_n$. 
Our goal will be to prove the following theorem.
\begin{thm}\label{T:main}
Let $R=b_1b_2\cdots b_m$ represent a rational tangle with at least two crossings and let $a_i, b_i,c_i,m,n \in \N$ for all $i$ and $n\geq2$.
Then the following three Montesinos links yield infinite families of non-alternating, quasi-alternating links:
\begin{enumerate}
\item[(i)] $L\bigl (a_1a_2,R,-n \bigr )$ with $1+a_1(a_2-n)<0$,
\item[(ii)] $L\bigl (a_1a_2,R,(-c_1)(-c_2)\bigr )$ with $a_2<c_2$ or $a_2=c_2$ and $a_1>c_1$,
\item[(iii)] $L\bigl (a_1a_2a_3,R,-n \bigr )$ with $a_3<n$.
\end{enumerate}
\end{thm}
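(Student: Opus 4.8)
The plan is to verify Definition~\ref{defqalt} by a single well-chosen crossing resolution, with the arbitrary tangle $R$ brought in afterwards through the tangle-replacement result of \citet{Champanerkar:2007}. As bookkeeping I would record for each rational tangle $R_i$ its fraction $\beta_i/\alpha_i$ coming from the continued fraction of its defining integers, and work throughout with the standard determinant formula for a three-tangle Montesinos link (with no extra half-twists),
\[
\det L(R_1,R_2,R_3)=\bigl|\,\beta_1\alpha_2\alpha_3+\alpha_1\beta_2\alpha_3+\alpha_1\alpha_2\beta_3\,\bigr|.
\]
In all three cases the first slot is a positive short tangle ($R_1=a_1a_2$ or $a_1a_2a_3$), the middle slot is the given $R$, and the third slot is the negative tangle ($R_3=-n$ or $(-c_1)(-c_2)$).

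The first step is to eliminate the arbitrary tangle $R$. Let $c$ denote the first crossing of the alternating diagram of $R$, and let $L'$ be the Montesinos link obtained from $L$ by collapsing the middle slot to the single crossing $c$. Since $R=b_1\cdots b_m$ has $b_i\in\N$ and at least two crossings, $R$ is an alternating rational tangle extending $c$; hence, once I show that $L'$ is quasi-alternating \emph{at} $c$, the theorem of \citet{Champanerkar:2007} lets me replace $c$ by $R$ and conclude that $L=L(\,\cdot\,,R,\,\cdot\,)$ is quasi-alternating. This reduces each of (i)--(iii) to the explicit three-tangle link $L'$ whose middle slot is one crossing.

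The core step is then to resolve $c$ in $L'$. One smoothing replaces the middle slot by the $0$-tangle and yields the numerator closure of $R_1+R_3$, a two-bridge link; the other smoothing replaces it by the $\infty$-tangle and yields the connected sum $N(R_1)\,\#\,N(R_3)$ of two two-bridge links. Both are alternating, hence quasi-alternating, and both have nonzero determinant ($\alpha_1$ and $\alpha_3$ are positive). It remains to check condition (iii), $\det L'=\det L'_0+\det L'_\infty$. Expanding the formula above with $\alpha_2=1,\ \beta_2=\pm1$ gives $\det L'_0=|\beta_1\alpha_3+\alpha_1\beta_3|$ and $\det L'_\infty=\alpha_1\alpha_3$, while $\det L'=|\beta_1\alpha_3+\alpha_1\beta_3\pm\alpha_1\alpha_3|$, so additivity holds exactly when $\beta_1\alpha_3+\alpha_1\beta_3$ carries the sign that makes the two terms add rather than cancel. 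Computing $\beta_1\alpha_3+\alpha_1\beta_3$ from the continued fractions turns this sign condition into $1+a_1(a_2-n)<0$ in case (i), into the alternative $a_2<c_2$ (or $a_2=c_2,\ a_1>c_1$) in case (ii), and into $a_3<n$ in case (iii): the three hypotheses of the theorem are precisely the determinant-additivity conditions.

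The main obstacle, and the step I would isolate as a standalone determinant lemma, is exactly this sign analysis: one must verify across all three families that under the stated inequalities the quantity $\beta_1\alpha_3+\alpha_1\beta_3$ has the sign opposite to $\alpha_1\alpha_3$ relative to the seed crossing, so that no cancellation occurs and (iii) holds with equality. The remaining task is the \emph{non-alternation} of the resulting links, which I would treat separately using the classification of alternating Montesinos links: an alternating diagram forces all tangle fractions to share a common sign, whereas the positive first slot together with the negative third slot, governed by the very inequalities above, creates a genuine sign conflict that cannot be removed, so the reduced diagrams are non-alternating. Letting the parameters range then produces the asserted infinite families.
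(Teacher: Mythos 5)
Your skeleton is the same as the paper's: collapse the middle tangle to a single crossing $c$, show both resolutions at $c$ are (connected sums of) rational links and hence quasi-alternating, verify determinant additivity, and then invoke the tangle-replacement theorem of \citet{Champanerkar:2007} to reinstate $R$. (Your $0$/$\infty$ labels are swapped relative to the paper's, which is immaterial.) Where you genuinely diverge is in the determinant verification: the paper computes $\det(\widehat{L^i})$ by counting spanning trees in the all-$A$ and all-$B$ dessins and applying the genus-one alternating-sum formula of \citet{Dasbach:2006} (after Lemma~\ref{prop:redMontesino} guarantees dessin genus one), whereas you use the closed-form Montesinos determinant $|\beta_1\alpha_2\alpha_3+\alpha_1\beta_2\alpha_3+\alpha_1\alpha_2\beta_3|$ coming from the tangle fractions. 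Your route is legitimate and in fact more uniform --- it treats (i)--(iii) (and the link in the paper's Remark) by one sign analysis of $\beta_1\alpha_3+\alpha_1\beta_3$ versus $\alpha_1\alpha_3$, and I checked that it reproduces the paper's values, e.g.\ $|n(a_1a_2+a_1+1)-a_1a_2-1|$ in case (i) --- but it buys this at the cost of pinning down the fraction conventions carefully (which closure of which fraction gives which two-bridge link), which is exactly the bookkeeping your ``standalone determinant lemma'' would have to carry. The paper's spanning-tree computation avoids any convention issues at the price of three separate diagram counts.

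The one genuine gap is your treatment of non-alternation. You assert that an alternating Montesinos link must have all tangle fractions of a common sign and that the mixed signs here ``cannot be removed.'' As stated this is not a proof: the fractions $\beta_i/\alpha_i$ are only defined up to the normalization moves $\beta_i/\alpha_i\mapsto\beta_i/\alpha_i\pm1$ (compensated by the integer $e$), so a sign conflict is not a priori an invariant of the link, and the actual classification of alternating Montesinos links would have to be quoted and applied with its normalization. The paper instead proves a short lemma: the diagrams in question are non-alternating \emph{reduced} Montesinos diagrams in the sense of \citet{Lickorish:1998}, hence realize the minimal crossing number, which contradicts Lickorish's result that a reduced alternating diagram of an alternating link has strictly fewer crossings than any non-alternating diagram. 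You should either supply the precise classification statement you are invoking or adopt the paper's crossing-number argument; everything else in your proposal goes through.
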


\section{Determinant}

The determinant of an alternating link is related to the number of spanning trees of its checkerboard graph. We will apply a generalization of this result obtained by \citet*{Dasbach:2006} to compute the determinant of Montesinos links. This for we recall the following definitions.
\begin{def.}[\cite{Lowrance:2007}]
The \emph{all-A dessin $\D(A)$} of a link, also known as \emph{ribbon graph}, is a graph which can be constructed out of a link diagram in the following way:\\
First, each crossing is replaced by an A-splicing (see Figure~\ref{F:splicing}). This results in a collection of circles in the plane with line segments joining them. Out of this projection, the all-A dessin is obtained by contracting each circle to a point such that the vertices of $\D(A)$ are in one-to-one correspondence with the circles. The edges then correspond to the line segments between them. The construction of the \emph{all-B dessin $\D(B)$} can be done analogously by replacing each crossing by a B-splicing.
\end{def.}
\begin{figure}[htbp]
 \begin{center}
 \psfrag{c}[]{crossing}
 \psfrag{Aspl}[]{A-splicing}
 \psfrag{Bspl}[]{B-splicing}
 \includegraphics[height=1.5cm]{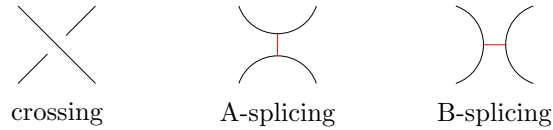}
\end{center}
 \caption{Splicings of a crossing.}
 \label{F:splicing}
 \end{figure}

If $\D$ is a dessin of a link $L$ there exists an orientation on it such that $\D$ can be viewed as a multi-graph equipped with a cyclic order on the edges at every vertex (for the exact construction see \cite{Lowrance:2007}). Therefore the dessin corresponds to a graph embedded on an orientable surface such that every region in the complement of the graph is a disc. We call the regions the \emph{faces} of the dessin. 

\begin{def.}[\cite{Lowrance:2007}]\label{def:dessingenus}
Let $\D$ be a dessin with one connected component and denote by $v(\D), e(\D)$ and $f(\D)$ the number of vertices, edges and faces in $\D$. 
The \emph{dessin genus} is calculated as follows:
$$g\left(\D(A)\right)=\frac{2-\left(v(\D)-e(\D)+f(\D)\right)}{2}.$$
\end{def.}
To compute $f(\D(A))$ the fact that $\D(A)$ and $\D(B)$ are dual to each other \citep{Lowrance:2007, Dasbach:2006a} is used. Hence $f(\D(A))=v(\D(B))$. Given these definitions a generalized formula to calculate the determinant of links with an all-A dessin of genus one can be stated.

\begin{thm}[\cite{Dasbach:2006}]  \label{thm:Dessingenus1}
Let $\D(A)$ and $\D(B)$ be the all-$A$ respectively the all-$B$ dessins of a connected link projection of a link $L$. Suppose $\D(A)$ is of dessin genus one.
Then
$$\det (L) =| \# \{\mbox{spanning trees in } \D(A)\} - \# \{\mbox {spanning trees in } \D(B)\}|.$$
\end{thm}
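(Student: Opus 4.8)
The plan is to realise $\det(L)$ as a special value of the Kauffman bracket $\langle D\rangle$ and then to reorganise the resulting state sum over spanning sub-ribbon-graphs of $\D(A)$. First I would use $\det(L)=|V_L(-1)|$ together with $V_L(t)=(-A)^{-3w(D)}\langle D\rangle$ with $t=A^{-4}$, so that evaluating at $A=e^{i\pi/4}$ (that is, $A^2=i$, $A^{-4}=-1$) gives $\det(L)=|\langle D\rangle|$ at that point. The decisive numerical coincidence is that the loop value $\delta=-A^2-A^{-2}$ \emph{vanishes} at $A=e^{i\pi/4}$; this is what makes the genus-one hypothesis carry the argument.

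Next I would expand $\langle D\rangle=\sum_{s}A^{a(s)-b(s)}\delta^{\,|s|-1}$ and invoke the dictionary of \cite{Dasbach:2006} identifying states $s$ with spanning subgraphs $H\subseteq\D(A)$: recording in $H$ the crossings that are $B$-smoothed, one has $a(s)-b(s)=e(\D(A))-2e(H)$ and $|s|=bc(H)$, the number of boundary components of the ribbon subgraph $H$. Hence $\langle D\rangle=A^{e(\D(A))}\sum_{H}A^{-2e(H)}\delta^{\,bc(H)-1}$. Because $\delta=0$ at the evaluation point, only the subgraphs with $bc(H)=1$ --- the spanning \emph{quasi-trees} of $\D(A)$ --- survive; and since $bc(H)\ge k(H)$ for any ribbon subgraph, every surviving $H$ is automatically connected.

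I would then sort the quasi-trees by genus. With $v:=v(\D(A))$ and $k(H)=1$, the Euler relation $bc(H)=2k(H)+e(H)-v-2g(H)$ turns $bc(H)=1$ into $e(H)=v-1+2g(H)$. Since $H$ lies inside the genus-one dessin $\D(A)$, its genus is $0$ or $1$. The genus-$0$ quasi-trees have exactly $v-1$ edges and are precisely the spanning trees of $\D(A)$, each contributing $A^{-2(v-1)}=(-i)^{v-1}$. For the genus-$1$ quasi-trees (which have $v+1$ edges) I would pass to the dual: $\D(B)=\D(A)^{*}$ and $f(\D(A))=v(\D(B))$, and complementation $H\mapsto\D(A)\setminus H$ carries quasi-trees of $\D(A)$ to quasi-trees of $\D(B)$ with their genera summing to $g(\D(A))=1$. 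Thus the genus-$1$ quasi-trees of $\D(A)$ are in bijection with the genus-$0$ quasi-trees of $\D(B)$, i.e. with the spanning trees of $\D(B)$, each now contributing $A^{-2(v+1)}=(-i)^{v+1}=-(-i)^{v-1}$.

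Collecting the two families gives
$$\langle D\rangle=A^{e(\D(A))}(-i)^{v-1}\bigl(\#\{\text{spanning trees of }\D(A)\}-\#\{\text{spanning trees of }\D(B)\}\bigr),$$
and since $|A^{e(\D(A))}(-i)^{v-1}|=1$, taking absolute values at $A=e^{i\pi/4}$ yields the claimed identity. The bracket bookkeeping and the powers of $A$ are routine (note that $A^{\pm4}=-1$ makes the relative sign between the two families robust to smoothing conventions). The genuine content, and what I expect to be the main obstacle, is the quasi-tree duality between $\D(A)$ and $\D(B)$ together with the additivity of genus under complementation, since this is exactly what converts the opaque genus-one term into a clean spanning-tree count on the dual dessin; I would also verify that the genus-one hypothesis really forbids any disconnected or higher-genus subgraph from contributing.
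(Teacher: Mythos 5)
This theorem is imported by the paper from \cite{Dasbach:2006} without proof, so there is no internal argument to compare against; your proposal is in substance the proof given in that reference. The quasi-tree expansion of the Kauffman bracket at $A=e^{i\pi/4}$ (where $\delta=0$ kills all states with more than one loop), the Euler-formula grading of quasi-trees by genus, and the complementation duality sending genus-one quasi-trees of $\D(A)$ to spanning trees of $\D(B)$ are exactly the right ingredients, and the sign bookkeeping via $A^{-4}=-1$ is correct. The only points you flag as needing verification --- that subgraph genus cannot exceed the dessin genus and that $bc(H)\ge k(H)$ forces connectivity --- do hold, so the argument is complete.
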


\begin{lem} \label{prop:redMontesino}
The dessin genus of a non-alternating Montesinos diagram equals one.
\end{lem}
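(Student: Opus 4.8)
The plan is to recast the genus as a count of state circles. By Definition~\ref{def:dessingenus}, proving $g(\mathcal{D}(A))=1$ is the same as proving $v(\mathcal{D}(A))-e(\mathcal{D}(A))+f(\mathcal{D}(A))=0$. For the all-$A$ dessin of a connected diagram the vertices are the circles of the all-$A$ state, so $v(\mathcal{D}(A))=s_A$; the edges are the crossings, so $e(\mathcal{D}(A))=c$ equals the number of crossings; and by the duality $f(\mathcal{D}(A))=v(\mathcal{D}(B))=s_B$ the faces count the circles of the all-$B$ state. The lemma is thus equivalent to the single identity $s_A+s_B=c$.

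For the inequality $s_A+s_B\le c$ (that is, genus at least one) I would invoke the criterion of Turaev that a connected link diagram has dessin genus zero exactly when it is alternating, which under the substitutions above reads $s_A+s_B=c+2$. Since the Montesinos diagram is assumed non-alternating, its genus is positive, so $s_A+s_B\le c$. This disposes of one direction immediately and concentrates the work in the reverse inequality.

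The substantial step is $s_A+s_B\ge c$, i.e.\ that the genus is at most one. Here I would use the reduced standard form of Figure~\ref{F:montesinos}, in which each rational tangle is an alternating chain of twist regions and the tangles are assembled in a row together with the $k$ half-twists. Since the interior of each tangle is alternating, no failure of the alternating condition occurs inside a tangle; the diagram fails to be alternating only along the closure arcs running across the top and bottom of the row and through the half-twist block. I would then follow the all-$A$ and all-$B$ state circles explicitly through each twist region and across these junctions. The key geometric observation to establish is that all the incompatible junctions lie on a single annular band encircling the row of tangles, and that such a band contributes at most one handle to the surface carrying $\mathcal{D}(A)$ no matter how many twists it carries; equivalently, the non-alternating junctions reduce $s_A+s_B$ from the alternating value $c+2$ by exactly $2$. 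Combined with the lower bound this forces $s_A+s_B=c$, hence $v-e+f=0$ and genus one.

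I expect this last count to be the main obstacle. One must track the $A$- and $B$-smoothings through a twist region while keeping account of the sign of each defining integer $a_j$, the parity of the number of crossings in each region, and the way the outer strands close up through the $k$ half-twists, and then verify that however the incompatible twist senses are distributed among the tangles the total never exceeds one handle, so that the genus is exactly one rather than larger. Exploiting the duality $f(\mathcal{D}(A))=v(\mathcal{D}(B))$ to count whichever of the two states is simpler at each junction should keep the casework under control.
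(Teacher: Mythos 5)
Your reduction of the statement to the single identity $s_A+s_B=c$ is correct, and the lower-bound half (genus at least one because the diagram is non-alternating, via the fact that a connected diagram has dessin genus zero exactly when it is alternating) is sound. But the other half --- that the genus is at most one, i.e.\ $s_A+s_B\ge c$ --- is where the entire content of the lemma lives, and you have not proved it: you describe it as ``the key geometric observation to establish'' and then explicitly defer the count, acknowledging it as ``the main obstacle.'' The claim that all the incompatible junctions lie on a single annular band contributing at most one handle is precisely what needs an argument, and nothing in the proposal supplies one; tracking the $A$- and $B$-state circles through every twist region, through the closure arcs, and through the $k$ half-twists for an arbitrary number of tangles is nontrivial casework that you have outlined but not carried out. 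As written, this is a plan for a proof rather than a proof.

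For comparison, the paper avoids this casework entirely by a reduction: a non-alternating Montesinos diagram is obtained from a non-alternating pretzel diagram $P(p_1,\ldots,p_n,-q_1,\ldots,-q_m)$ by replacing the tassels with rational tangles, and inserting a rational tangle into a quasi-alternating-style twist region does not change the dessin genus; the pretzel case is then quoted from \cite{Champanerkar:2007}. If you want to complete your direct approach, the cleanest fix is to borrow that reduction in reverse --- first show (or cite) that collapsing each rational tangle to a single crossing preserves $v-e+f$, which reduces your count to the pretzel case where the state circles can be enumerated explicitly --- rather than attempting the full circle-tracing argument on the general Montesinos diagram.
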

\begin{proof} 
A non-alternating Montesinos link diagram can be obtained out of a non-alternating pretzel link diagram $P(p_1,\ldots,p_n,-q_1,\ldots,-q_m)$ by replacing the tassels with rational tangles. Inserting a rational tangle does not change the dessin genus, therefore the Montesinos link will have the same dessin genus as the pretzel link. It was shown in \cite{Champanerkar:2007} that this pretzel link diagrams have dessin genus one,thus the non-alternating Montesinos diagrams have dessin genus one too.
\end{proof}
According to this lemma the above theorem can be applied to Montesinos links. Moreover it shows that the Turaev genus of a non-alternating Montesinos link equals one.

\begin{figure}[htbp]
 \begin{center}
 \psfrag{a1}[]{$a_1$}
 \psfrag{a2}[]{$a_2$}
 \psfrag{a3}[]{$a_3$}
 \psfrag{a4}[]{$a_4$}
 \psfrag{423}[]{$C(4,,2,3)$}
 \psfrag{3124}[]{$C(3,-1,2,4)$}
 \includegraphics[height=2.5cm]{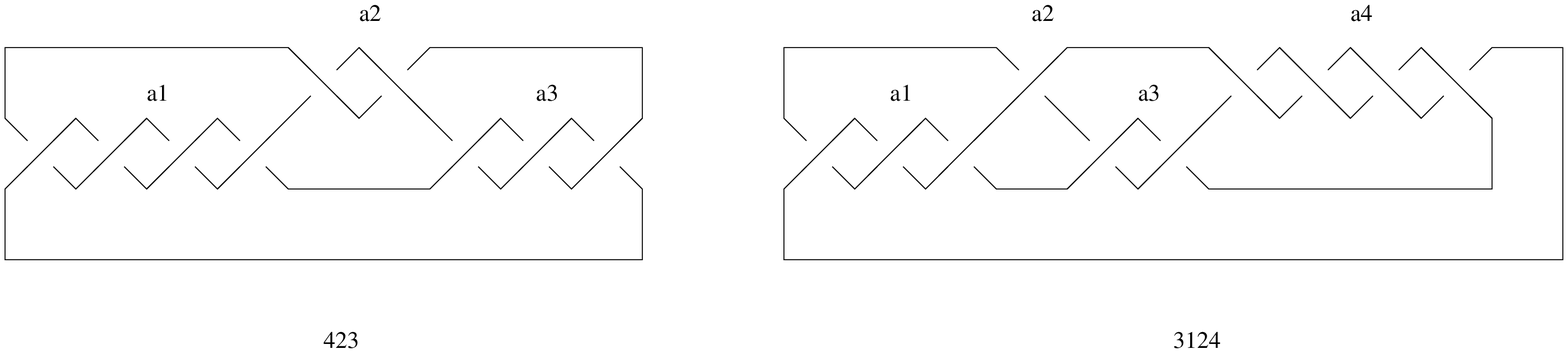}
\end{center}
 \caption[Rational links.]{Examples of rational links.}
 \label{F:ratlink}
 \end{figure}
A rational link is a link which admits a projection as in Figure~\ref{F:ratlink} and it is denoted by $C(a_1,a_2,\ldots,a_n)$. The determinant of these links has been studied by \citet*{Kauffman:2007}.

\begin {lem}[\cite{Kauffman:2007}] \label{lem:detratlink}
Let $n_i \in \Z \backslash \{0\}$ then we have
\begin{align}\notag
&\det C(n_1)&=&|n_1|&\\  \notag
&\det C(n_1,n_2)&=&|1+n_1n_2|&\\ \notag
&\det C(n_1,n_2,n_3)&=&|n_1+n_3+n_1n_2n_3|&\\ \notag
&\det C(n_1,n_2,n_3,n_4)&=&|1+n_1n_2+n_1n_4+n_3n_4+n_1n_2n_3n_4|.&
\end{align}
\end{lem}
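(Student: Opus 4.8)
The plan is to reduce all four formulas to a single two-term recurrence and then expand it in the four cases. The key classical input is that the rational link $C(n_1,n_2,\ldots,n_n)$ is a two-bridge link: it is the numerator closure of the rational tangle assembled from the twist regions $n_1,\ldots,n_n$ as in Figure~\ref{F:ratlink}, and Conway's tangle calculus assigns to this tangle a fraction $p/q$ computed by a continued fraction in the $n_i$. I would first record that the determinant of a two-bridge link is the numerator of its fraction; the cleanest justification is that the double branched cover $\Sigma_2$ of $C(n_1,\ldots,n_n)$ is a lens space with first homology $\Z/p\Z$, so that $\det=|H_1(\Sigma_2)|=|p|$. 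Hence the whole lemma follows once $p$ is known explicitly.

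Next I would establish the recurrence for the numerators. Let $d_k$ denote the numerator of the fraction of the tangle built from the first $k$ parameters. The convergent recursion of continued fractions gives
$$d_k = n_k\,d_{k-1} + d_{k-2}, \qquad d_0 = 1, \quad d_{-1} = 0,$$
so that $d_k$ is the \emph{continuant} $K(n_1,\ldots,n_k)$. I would prove this by tracking how appending one twist region changes the tangle fraction (a horizontal twist adds an integer, a turn inverts it), which is exactly the convergent recurrence; equivalently, one computes the Goeritz matrix of the standard diagram, observes that it is tridiagonal with diagonal entries $\pm n_i$, and expands along the last row to obtain the same relation. The determinant is then $|d_n|$.

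Finally I would unfold the recurrence for $n=1,2,3,4$: from $d_0=1$ and $d_1=n_1$ one gets $d_2 = 1 + n_1 n_2$, then $d_3 = n_1 + n_3 + n_1 n_2 n_3$, and $d_4 = 1 + n_1 n_2 + n_1 n_4 + n_3 n_4 + n_1 n_2 n_3 n_4$, which are precisely the displayed expressions after taking absolute values. The main obstacle is the middle step, namely pinning down the relation between the link determinant and the continued-fraction numerator with the correct conventions, so that the recurrence carries the $+$ sign of the continuant rather than the alternating-sign variant; I expect the sign bookkeeping in the Goeritz matrix (or in the orientation conventions for the tangle fraction) to be the only delicate point. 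Once the two-term recurrence is secured, the base cases and the expansion are routine algebra. I remark that in the alternating case (all $n_i>0$) one could instead use the classical identification of $\det$ with the number of spanning trees of the checkerboard graph, whose series--parallel structure reproduces the same continuant; the fraction approach is preferable because it handles arbitrary signs $n_i\in\Z\setminus\{0\}$ uniformly.
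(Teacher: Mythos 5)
This lemma is quoted in the paper from \cite{Kauffman:2007} without any proof, so there is nothing internal to compare against; judged on its own, your argument is correct, and it is a genuinely different route from the one in the cited source. Kauffman and Lopes obtain these determinants combinatorially, by a count of spanning subforests of the checkerboard-type graph of the standard rational diagram, whereas you go through the tangle fraction: $C(n_1,\dots,n_k)$ is the two-bridge link $b(p,q)$ with $p/q$ given by the continued fraction in the $n_i$, its double branched cover is the lens space $L(p,q)$, hence $\det = |H_1| = |p|$, and $p$ is the continuant $K(n_1,\dots,n_k)$ satisfying $d_k = n_k d_{k-1}+d_{k-2}$. Your expansions $d_2=1+n_1n_2$, $d_3=n_1+n_3+n_1n_2n_3$, $d_4=1+n_1n_2+n_1n_4+n_3n_4+n_1n_2n_3n_4$ are exactly right. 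Two small points you should make explicit to close the argument: first, the fraction produced by the convergent recurrence is automatically in lowest terms (consecutive continuants are coprime, by the Euclidean algorithm), so $|p|$ really is the order of $H_1$ of the lens space and no reduction step can change it; second, the degenerate case $p=0$ gives $S^1\times S^2$ with infinite $H_1$ and $\det=0$, consistent with the formulas. The sign issue you flag is real but resolves cleanly in the fraction picture, since the convergent recurrence carries the $+$ sign for arbitrary $n_i\in\Z\setminus\{0\}$; this uniformity over signs is precisely what your approach buys over the spanning-tree count, which is most natural in the alternating (all $n_i>0$) case.
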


\section{Quasi-alternating Montesinos links}

To obtain a family of non-alternating Montesinos links we will need the following definition.

\begin{def.}[\cite{Lickorish:1998}]\label{defreducedmontesino}
A diagram $D$ as in Figure~\ref{F:montesinos} is called a \emph{reduced Montesinos diagram} if it satisfies one of the following two conditions:
\begin{itemize}
\item[(i)] $D$ is alternating, or
\item[(ii)] Each $R_i$ is an alternating rational tangle diagram with at least two crossings placed in $D$ such that the two lower ends of $R_i$ belong to arcs incident to a common crossing  and $k=0$.
\end{itemize}
\end{def.}

It was shown by \citet*{Lickorish:1998} that a link which admits a $n$-crossing, reduced Montesinos diagram cannot be projected with fewer then $n$ crossings.

\begin{lem}
A link which admits a non-alternating reduced Montesinos diagram is non-alternating.
\end{lem}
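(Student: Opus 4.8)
The plan is to deduce non-alternatingness from a comparison of two invariants that both specialise sharply on alternating links, namely the crossing number $c(L)$ and the breadth $\mathrm{span}\,V_L$ of the Jones polynomial. The two facts just recorded do most of the work. On the one hand, the result of \cite{Lickorish:1998} quoted above tells us that a reduced Montesinos diagram $D$ with $n$ crossings is already crossing-minimal, so $c(L)=n$. On the other hand, Lemma~\ref{prop:redMontesino} tells us that, in the non-alternating case, $D$ has dessin genus one. I would then argue that dessin genus one forces the Jones breadth to drop strictly below $n$, which is incompatible with $L$ being alternating.

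Concretely, write $|s_A(D)|$ and $|s_B(D)|$ for the numbers of state circles in the all-$A$ and all-$B$ smoothings of $D$; by construction these are $v(\D(A))$ and $v(\D(B))=f(\D(A))$, while $e(\D(A))=n$. Feeding the genus-one condition $v(\D(A))-e(\D(A))+f(\D(A))=0$ of Definition~\ref{def:dessingenus} into $e(\D(A))=n$ gives $|s_A(D)|+|s_B(D)|=n$. The standard estimates for the extreme degrees of the Kauffman bracket of a connected diagram, $\mathrm{maxdeg}\,\langle D\rangle\le n+2|s_A(D)|-2$ and $\mathrm{mindeg}\,\langle D\rangle\ge -n-2|s_B(D)|+2$, then yield $\mathrm{span}\,\langle D\rangle\le 2n+2(|s_A(D)|+|s_B(D)|)-4=4n-4$, hence $\mathrm{span}\,V_L\le n-1$. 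Since $D$ is minimal, $n=c(L)$, so $\mathrm{span}\,V_L\le c(L)-1$. By the theorem of Kauffman, Murasugi and Thistlethwaite, an alternating link satisfies $\mathrm{span}\,V_L=c(L)$; the strict inequality therefore rules this out, and $L$ is non-alternating.

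The delicate point is precisely obtaining the \emph{strict} gap in the second step. It is tempting to argue instead that the Turaev genus of $L$ equals one and to invoke the characterisation that a link is alternating iff its Turaev genus vanishes, but that requires a lower bound $g_T(L)\ge 1$ for the \emph{link}, not merely the equality $g_T(D)=1$ for the diagram. The usual lower bounds coming from the homological width of Khovanov or knot Floer homology are useless here, since the links in question are (quasi-)alternating and hence homologically thin, so their width cannot separate them from genuinely alternating links. This is exactly why the argument must run through the crossing-number/Jones-breadth comparison (or, equivalently, through the fact that a reduced Montesinos diagram is $A$- and $B$-adequate, so that its extreme Kauffman-bracket degrees are attained and $\mathrm{span}\,V_L=n-g_T(D)$ holds on the nose). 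Verifying that the quoted degree estimates apply to our connected diagram $D$ and citing the minimality statement of \cite{Lickorish:1998} correctly are then the only routine points that remain.
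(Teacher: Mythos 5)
Your proof is correct, and its first half is identical to the paper's: both use the Lickorish--Thistlethwaite minimality of an $n$-crossing reduced Montesinos diagram to conclude $c(L)=n$. The second halves genuinely differ. The paper finishes by citing a lemma of Lickorish to the effect that a connected reduced alternating diagram of $L$ has strictly fewer crossings than any non-alternating diagram of $L$, giving $n\le m<n$ at once. You instead extract the strict gap from Lemma~\ref{prop:redMontesino}: dessin genus one means $v(\D(A))-e(\D(A))+f(\D(A))=0$, hence $|s_A(D)|+|s_B(D)|=n$, so the standard extreme-degree bounds for the Kauffman bracket give $\mathrm{span}\,V_L\le n-1$, contradicting the Kauffman--Murasugi--Thistlethwaite equality $\mathrm{span}\,V_L=c(L)$ for alternating links. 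Both routes rest on the same span machinery, but yours buys two things: it is self-contained apart from the textbook degree estimates, and it sidesteps the primality hypothesis that the diagram-comparison lemma actually carries (that lemma fails for composite diagrams, e.g.\ a six-crossing non-alternating picture of the square knot), whereas the paper's two-line version silently relies on the Montesinos link being prime. The small points you should still record are that the diagram is connected, so the dessin-genus computation and the degree estimates apply, and that $\mathrm{span}\,V_L=c(L)$ is stated for non-split alternating links --- splitness only increases the span, so the contradiction survives. Your closing remarks about why a direct Turaev-genus or homological-width argument would not supply the needed strict lower bound are accurate and well taken.
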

\begin{proof}
Let $L$ be a link which admits a non-alternating, reduced Montesinos diagram with $n$ crossings. Therefore the minimal crossing number has to be $n$. Assume $L$ is alternating. Then $L$ possesses a connected, reduced, alternating diagram with $m$ crossings. According to a lemma of \citet{Lickorish:1997} $m$ is strictly smaller then any crossing number of a non-alternating diagram of the same link. Therefore $m<n$ which is a contradiction.
\end{proof}


\begin{proof}[\textbf{Proof of Theorem~\ref{T:main}}]
Let $R=b_1b_2\cdots b_m$ represent a rational tangle with at least two crossings and let $a_i, b_i,c_i,m,n \in \N$ for all $i$ and $n\geq2$.
Further let $L^1=L\bigl (a_1a_2,R,-n \bigr )$, $L^2=L\bigl (a_1a_2,R,(-c_1)(-c_2)\bigr )$ and $L^3=L\bigl (a_1a_2a_3,R,-n \bigr )$.
The links $L^1, L^2$ and $L^3$ are non-alternating since they possess a non-alternating, reduced Montesinos link diagram.\\
Now let $\widehat{L^i}$ be the link $L^i$ with the rational tangle $R$ replaced by one single positive crossing called $c$, for $i\in\{1,2,3\}$. First we show that the resolutions $\widehat{L^i_0}$ and $\widehat{L^i_\infty}$ are quasi-alternating at $c$. For all $i$, the link $\widehat{L^i_0}$ is the sum of two alternating links and therefore quasi-alternating.\\
The resolutions $\widehat{L^i_\infty}$ are rational links:
\begin{eqnarray*}
\widehat{L^1_\infty}&=&C\bigl(a_1,(a_2-n)\bigr)\\
\widehat{L^2_\infty}&=&C\bigl(a_1,(a_2-c_2),-c_1)\bigr)\\
\widehat{L^3_\infty}&=&C\bigl(a_1,a_2,(a_3-n)\bigr).
\end{eqnarray*}
Since each rational link possesses an alternating projection (see \citet*{Bankwitz:1934}), the resolutions $\widehat{L^i_\infty}$ are quasi-alternating. 
It remains to show that the determinants add up correctly.

\begin{itemize}
\item[(i)] For $\widehat{L^1}$ let $1+a_1(a_2-n)<0$. By applying Lemma~\ref{lem:detratlink}, the determinants of the resolutions for $\widehat{L^1}$ at $c$ hold:
\begin{eqnarray*}
\det(\widehat{L^1_0})&=&\det \bigl(T(2,-n)\#C(a_1,a_2)\bigr)=\det T(2,-n)\cdot \det C(a_1,a_2)\\&=&n(1+a_1a_2)\\
\det(\widehat{L^1_\infty})&=&\det C\bigl(a_1,(a_2-n)\bigr)=|1+a_1(a_2-n)|\\&=&(-1)(1+a_1(a_2-n)).
\end{eqnarray*}
The determinant for $\widehat{L^1}$ can be calculated out of the diagrams of its all-A and all-B dessins. The number of spanning trees can be computed directly by inspecting the diagrams of Figure~\ref{F:ABL1}:

\begin{figure}[htbp]
\begin{center}
\psfrag{Bsplice}[]{all-B splicing}
\psfrag{Bdessin}[]{all-B dessin}
\psfrag{Asplice}[]{all-A splicing}
\psfrag{Adessin}[]{all-A dessin}
\psfrag{a1}[]{{\scriptsize$a_1$}}
\psfrag{a2}[]{{\scriptsize $a_2$}}
\psfrag{n}[]{{\scriptsize $n$}}
\includegraphics[height=7cm]{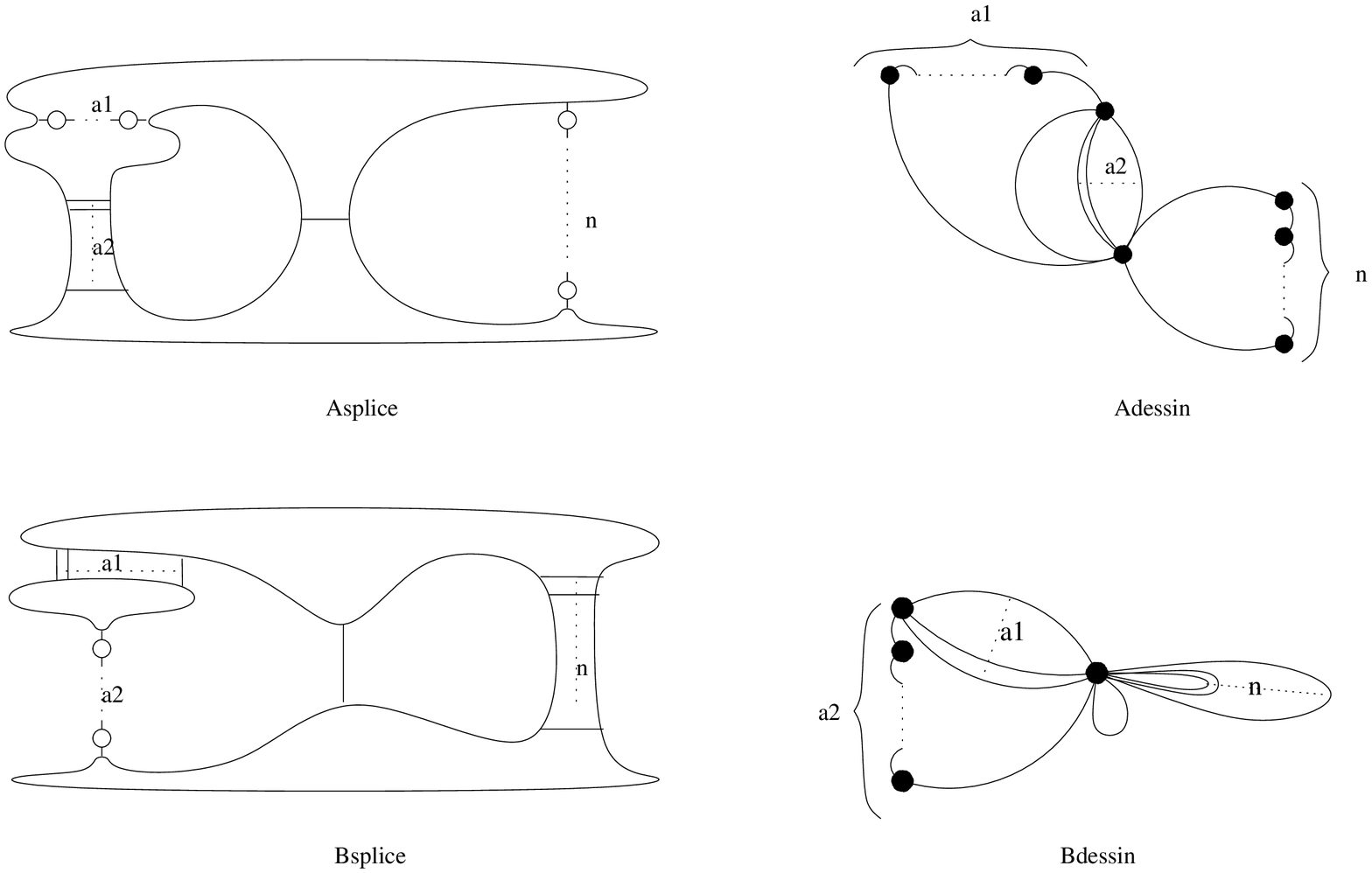}
\end{center}
\caption{All-A/B splicings and the all-A/B dessins of $\widehat{L^1}=L\bigl(a_1a_2,1,-n\bigr)$.}
\label{F:ABL1}
\end{figure}

\begin{eqnarray*}
\# \{\mbox{spanning trees in } \D(A)\} &=&n\bigl(a_1(a_2+1)+1\bigr)\\
\# \{\mbox{spanning trees in } \D(B)\}&=&a_1a_2+1.
\end{eqnarray*}
By Theorem~\ref{thm:Dessingenus1} we get $\det(L)=|n(a_1a_2+a_1+1)-a_1a_2-1|$. Hence
\begin{eqnarray*}
\det(L_0)+\det(L_\infty)&=&\underbrace{n(1+a_1a_2)}_{>0}+\underbrace{(-1)(1+a_1(a_2-n))}_{>0}\\
&=&|n(a_1a_2+a_1+1)-a_1a_2-1|\\
&=&\det(L).
\end{eqnarray*}
\item[(ii)] For $\widehat{L^2}$ let $a_2<c_2$ or $a_2=c_2$ and $a_1>c_1$. For the determinants of the resolutions we get:
\begin{eqnarray*}
\det(\widehat{L^2_0})&=&\det \bigl(C(a_1,a_2)\#C(c_1,c_2)\bigr)=\det C(a_1,a_2)\cdot \det C(c_1,c_2)\\&=&(1+a_1a_2)(1+c_1c_2)\\
\det(\widehat{L^2_\infty})&=& \bigl |a_1-c_1-a_1c_1\underbrace{(a_2-c_2)}_{\le 0} \bigr |\\ &=&a_1-c_1-a_1c_1(a_2-c_2)\\ &=&a_1(1+c_1c_2)-c_1(1+a_1a_2).
\end{eqnarray*}

\begin{figure}[htbp]
\begin{center}
\psfrag{Bsplice}[]{all-B splicing}
\psfrag{Bdessin}[]{all-B dessin}
\psfrag{Asplice}[]{all-A splicing}
\psfrag{Adessin}[]{all-A dessin}
\psfrag{a1}[]{{\scriptsize $a_1$}}
\psfrag{a2}[]{{\scriptsize $a_2$}}
\psfrag{c1}[]{{\scriptsize $c_1$}}
\psfrag{c2}[]{{\scriptsize $c_2$}}
\includegraphics[height=7cm]{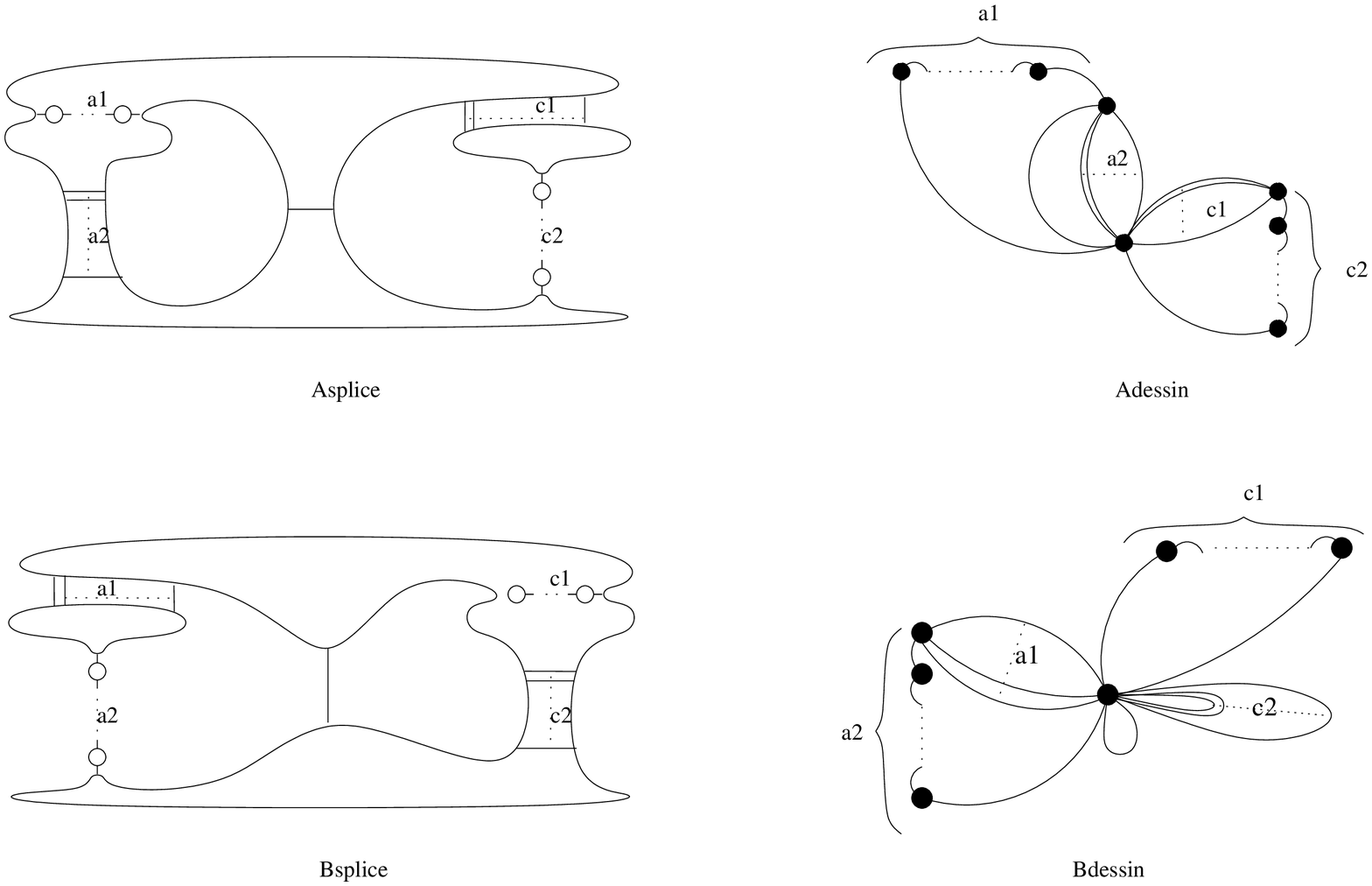}
\end{center}
\caption{All-A/B splicings and the all-A/B dessins of $\widehat{L^2}=L\bigl(a_1a_2,1,(-c_1)(-c_2)\bigr)$.}
\label{F:ABL2}
\end{figure}

The number of spanning trees of the all-A/B dessins of $\widehat{L^2}$ (see Figure~\ref{F:ABL2}) is given by:
\begin{eqnarray*}
\# \{\mbox{spanning trees in } \D(A)\} &=&(a_1a_2+a_1+1)(c_1c_2+1)\\
\# \{\mbox{spanning trees in } \D(B)\}&=&c_1(a_1a_2+1).
\end{eqnarray*}
Now according to Theorem~\ref {thm:Dessingenus1} we get:
\begin{eqnarray*}
\det(L)&=&\bigr |(a_1a_2+a_1+1)(c_1c_2+1)-c_1(a_1a_2+1)\bigr |\\
&=& \bigl |(a_1a_2+1)(c_1c_2+1)+a_1(c_1c_2+1)-c_1(a_1a_2+1)\bigr |\\
&=& \bigl |(a_1a_2+1)(c_1c_2+1)\bigl |+ \bigr |a_1(c_1c_2+1)-c_1(a_1a_2+1)\bigr |\\
&=&\det(L_0)+\det(L_\infty).
\end{eqnarray*}
\item[(iii)] For $\widehat{L^3}$ let $a_3<n$. For the determinants of the resolutions we get:
\begin{eqnarray*}
\det(\widehat{L^3_0})&=&\det \bigl(C(a_1,a_2,a_3)\#T(2,-n)\bigr)=n(a_1+a_3+a_1a_2a_3)\\
\det(\widehat{L^3_\infty})&=&\det C(a_1,a_2,a_3-n)=\bigl |a_1+(a_3-n)+a_1a_2(a_3-n)\bigr |\\
&=&\bigl |a_1+\underbrace{(a_3-n)}_{<0}(1+a_1a_2)\bigr |\\
&=&(-1)\bigl (a_1+a_3+a_1a_2a_3-n(1+a_1a_2)\bigr ).
\end{eqnarray*}

\begin{figure}[htbp]
\begin{center}
\psfrag{a1}[]{{\scriptsize$a_1$}}
\psfrag{a2}[]{{\scriptsize $a_2$}}
\psfrag{a3}[]{{\scriptsize $a_3$}}
\psfrag{n}[]{{\scriptsize $n$}}
\psfrag{Bsplice}[]{all-B splicing}
\psfrag{Bdessin}[]{all-B dessin}
\psfrag{Asplice}[]{all-A splicing}
\psfrag{Adessin}[]{all-A dessin}
\includegraphics[height=7cm]{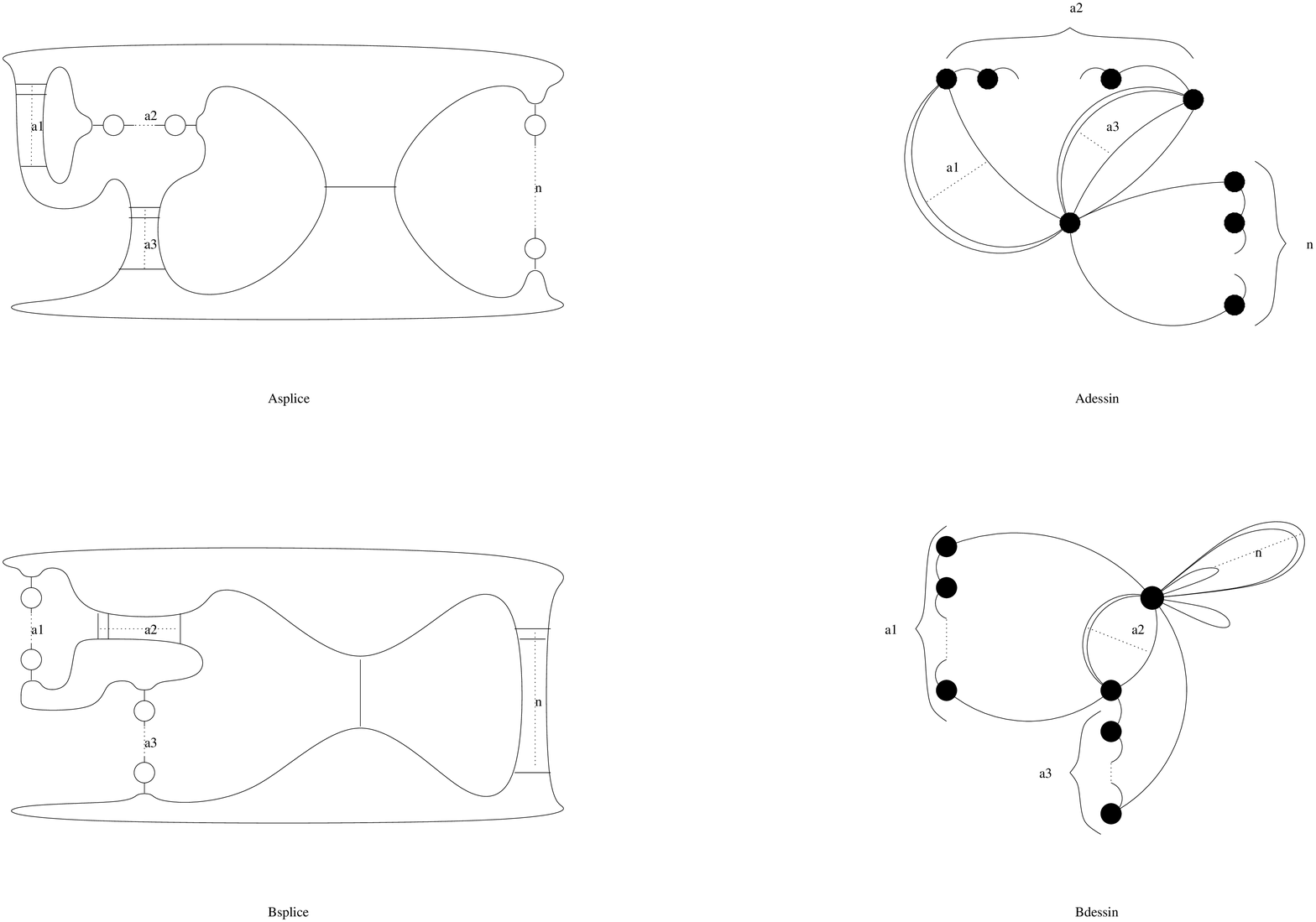}
\end{center}
\caption{All-A/B splicings and the all-A/B dessins of $\widehat{L^3}=L(a_1a_2a_3,1,-n)$.}
\label{F:ABL3}
\end{figure}

The number of spanning trees of the all -A/B dessin of $\widehat{L^3}$ (see Figure~\ref{F:ABL3}) is given by:
\begin{eqnarray*}
\# \{\mbox{spanning trees in } \D(A)\} &=&n(1+a_1+a_3+a_1a_2+a_1a_2a_3)\\
\# \{\mbox{spanning trees in } \D(B)\}&=&a_1+a_3+a_1a_2a_3,
\end{eqnarray*}
which leads to
\begin{eqnarray*}
\det(L)&=&\bigl |n(1+a_1+a_3+a_1a_2+a_1a_2a_3)-(a_1+a_3+a_1a_2a_3)\bigr |\\
&=&\bigl |\underbrace{n(a_1+a_3+a_1a_2a_3)}_{>0}+\underbrace{(-1)(a_1+a_3+a_1a_2a_3-n(1+a_1a_2))}_{>0}\bigr |\\
&=&\det(L_0)+\det(L_\infty).
\end{eqnarray*}
\end{itemize}
This shows that the determinants add up correctly for all three links $\widehat{L^i}$. Since all the resolutions are quasi-alternating the crossing at $c$ is quasi-alternating. Therefore, according to \cite{Champanerkar:2007}, it can be replaced by any alternating rational tangle which extends $c$. This completes the proof.
\end{proof}

\begin{rem}
For $a_i, b_i, c_i\in\N$ and $R=b_1b_2\cdots b_m$ it can be shown that the link $L\bigl(a_1a_2a_3,R,(-c_1)(-c_2)(-c_3)\bigr)$ with $\frac{1+a_1a_2}{1+c_1c_2}>\frac{a_1+a_3+a_1a_2a_3}{c_1+c_3+c_1c_2c_3}$ is quasi-alternating. The proof is analogous to the previous one only it has to be taken in account that the sum of two rational links is always a rational link whose determinant can be calculated out of the continuous fraction which is defined by the link \cite{Ernst:1990}.  It is notable that the calculation of the spanning trees gets more complicated the more tassels the link has.
\end{rem}

\section{Examples}

We will now apply Theorem ~\ref{T:main} to analyze knots with 11 crossings.
There exist 185 non-alternating prime knots with crossing number 11. Out of these, 67 are Montesinos links.  By our method, we can identifiy 23 non-alternating Montesinos links to be quasi-alternating. These are listed in Table~1, together with their Conway notation according to KnotInfo~\cite{Cha:}. Note that the minus at the last rational tangle represents a negative crossing. We have the following identities: $[2-]=[2,-1]=[-2]$, $[21-]=[21,-1]=[-3]$ and $[3-]=[3,-1]=[(-2)(-1)]$.
\begin{table}[htbp]
\begin{center}
  \setlength{\belowcaptionskip}{6pt}
\caption{Non-alternating, quasi-alternating Montesinos knots with 11 crossings detected to be quasi-alternating by Theorem~\ref{T:main}.}
\begin{tabular} {l l|| l l }\toprule
Knot		&Conway notation	&Knot	&Conway notation\\ \midrule
11n2		&[221;211;2-]		&11n84	&[22;22;21-]\\
11n3		&[221;22;2-]		&11n87	&[212;21;21-]\\
11n14	&[41;211;2-]		&11n89	&[31;211;21-]\\
11n15	&[41;22;2-]		&11n90	&[31;211;3-]\\
11n17	&[311;211;2-]		&11n100	&[221;3;21-]\\
11n18	&[311;22;2-]		&11n103	&[211;211;21-]\\
11n29	&[231;21;2-]		&11n106	&[212;3;21-]\\
11n30	&[231;3;2-]		&11n122	&[32;3;21-]\\
11n48	&[31;22;3-]		&11n137	&[311;21;21-]\\
11n63	&[411;21;2-]		&11n140	&[41;21;21-]\\
11n64	&[411;3;2-]		&11n141	&[41;3;3-]\\
11n83	&[31;22;21-]		
\end{tabular}
\end{center}
  \setlength{\belowcaptionskip}{0pt}
\end{table}

By applying the findings of  \citet*{Champanerkar:2007}, we can identify 17 more Montesinos links to be quasi-alternating. In Table~2 for each knot there is one rational tangle of the Conway notation indicated in bold. This tangle is replaced with a crossing of the same sign and checked if it is a quasi-alternating crossing in the new diagram.
\begin{table}[htbp]
\begin{center}
  \setlength{\belowcaptionskip}{6pt}
\caption{Non-alternating, quasi-alternating Montesinos knots with 11 crossings detected to be quasi-alternating by inserting a rational tanlge.}
\begin{tabular} {l l || l l}\toprule
Knot			&Conway notation  	&Knot	&Conway notation \\ \midrule
11n1		&[\textbf{23};211;2-]		&11n58	&[\textbf{312};21;2-]\\	
11n13	&[\textbf{5};211;2-]		&11n59	&[\textbf{3111};21;2-]\\
11n16	&[\textbf{32};211;2-]		&11n60	&[3111;\textbf{3};2-]\\	
11n28	&[\textbf{24};21;2-]		&11n62	&[\textbf{42};21;2-]\\
11n51	&[\textbf{213};21;2-]		&11n82	&[\textbf{4};22;21-]\\	
11n52	&[\textbf{2121};21;2-]	&11n91	&[\textbf{4};211;21-]\\
11n54	&[\textbf{2112};21;2-]	&11n101	&[\textbf{23};21;21-]\\	
11n55	&[\textbf{21111};21;2-]	&11n105	&[\textbf{2111};21;21-]\\
11n56	&[21111;\textbf{3};2-]
\end{tabular}
\end{center}
  \setlength{\belowcaptionskip}{0pt}
\end{table}

For the sake of completeness, Table~3 gives a  list of all non-alternating, quasi-alternating knots up to 10 crossings detected by \citet*{Manolescu:2006a}, \citet*{Baldwin:2008} and \citet*{Champanerkar:2007}. The 16 knots indicated with a cross could also be detected to be quasi-alternating by Theorem~\ref{T:main}. The remaining non-alternating knots are Khovanov homologically thick except for $9_{46}$ and $10_{140}$, which are not quasi-alternating by forthcoming work of Shumakovitch.

\begin{table}[htbp] 
\begin{center}
  \setlength{\belowcaptionskip}{6pt}
\caption{Non-alternating, quasi-alternating knots up to 10 crossings.}
\begin{tabular} {l r l |l r l |l r l |l r l}\toprule
$8_{20}$	 	& \cite{Baldwin:2008}		&&$8_{21}$	& \cite {Manolescu:2006a}&x&
$9_{43}$		& \cite {Manolescu:2006a}	&x&$9_{44}$	& \cite {Manolescu:2006a}&\\
$9_{45}$		& \cite {Manolescu:2006a}	&x&$9_{47}$	& \cite {Manolescu:2006a}&&
$9_{48}$		& \cite {Manolescu:2006a}	&x&$9_{49}$	& \cite {Manolescu:2006a}&\\
$10_{125}$	& \cite {Baldwin:2008}		&&$10_{126}$	& \cite {Baldwin:2008}&x&
$10_{127}$	& \cite {Baldwin:2008}		&x&$10_{129}$	& \cite {Champanerkar:2007}&\\
$10_{130}$	& \cite {Champanerkar:2007}	&x&$10_{131}$	& \cite {Champanerkar:2007}&x&
$10_{133}$	& \cite {Champanerkar:2007}	&&$10_{134}$		& \cite {Champanerkar:2007}&x\\
$10_{135}$	& \cite {Champanerkar:2007}	&x&$10_{137}$	& \cite {Champanerkar:2007}&&
$10_{138}$	& \cite {Champanerkar:2007}	&x&$10_{141}$	& \cite {Baldwin:2008}&\\
$10_{142}$	& \cite {Champanerkar:2007}	&x&$10_{143}$	& \cite {Baldwin:2008}&x&
$10_{144}$	& \cite {Champanerkar:2007}	&x&$10_{146}$	& \cite {Champanerkar:2007}&x\\
$10_{147}$	& \cite {Champanerkar:2007}	&x&$10_{148}$	& \cite {Baldwin:2008}&&
$10_{149}$	& \cite  {Baldwin:2008}		&&$10_{150}$	& \cite {Champanerkar:2007}&\\
$10_{151}$	& \cite {Champanerkar:2007}	&&$10_{155}$	& \cite  {Baldwin:2008}&&
$10_{156}$	& \cite {Champanerkar:2007}	&&$10_{157}$	& \cite  {Baldwin:2008}&\\
$10_{158}$	& \cite {Champanerkar:2007}	&&$10_{159}$	& \cite {Baldwin:2008}&&
$10_{160}$	& \cite {Champanerkar:2007}	&&$10_{163}$	& \cite {Champanerkar:2007}&\\
$10_{164}$	& \cite {Champanerkar:2007}	&&$10_{165}$	& \cite {Champanerkar:2007}&&
$10_{166}$	& \cite {Champanerkar:2007}	&&			&	&\\ \bottomrule
\end{tabular}
\end{center}
  \setlength{\belowcaptionskip}{0pt}
\end{table}

\subsection{Acknowledgements}
I am deeply grateful to Anna Beliakova, whose help was essential for me to develop the presented results.



\begin{thebibliography}{14}
\providecommand{\natexlab}[1]{#1}
\providecommand{\url}[1]{\texttt{#1}}
\expandafter\ifx\csname urlstyle\endcsname\relax
  \providecommand{\doi}[1]{doi: #1}\else
  \providecommand{\doi}{doi: \begingroup \urlstyle{rm}\Url}\fi

\bibitem[Baldwin(2008)]{Baldwin:2008}
J.~A. Baldwin.
\newblock {Heegaard Floer homology and genus one, one boundary component open
  books}.
\newblock \emph{arXiv}, math.GT, Apr 2008.
\newblock URL \url{http://arxiv.org/abs/0804.3624v2}.

\bibitem[Bankwitz and Schumann(1934)]{Bankwitz:1934}
C.~Bankwitz and H.~G. Schumann.
\newblock {\"Uber Viergeflechte}.
\newblock \emph{Abh. Math. Sem. Univ. Hamburg}, 10:\penalty0 263--284, 1934.

\bibitem[Cha and Livingston()]{Cha:}
J.~C. Cha and C.~Livingston.
\newblock Knotinfo: Table of knot invariants.
\newblock URL \url{http://www.indiana.edu/~knotinfo}.

\bibitem[Champanerkar and Kofman(2007)]{Champanerkar:2007}
A.~Champanerkar and I.~Kofman.
\newblock {Twisting quasi-alternating links}.
\newblock \emph{arXiv}, math.GT, Dec 2007.
\newblock URL \url{http://arxiv.org/abs/0712.2590v2}.

\bibitem[Dasbach et~al.(2006{\natexlab{a}})Dasbach, Futer, Kalfagianni, Lin,
  and Stoltzfus]{Dasbach:2006}
O.~T. Dasbach, D.~Futer, E.~Kalfagianni, X.~Lin, and N.~W. Stoltzfus.
\newblock Alternating sum formulae for the determinant and other link
  invariants.
\newblock \emph{arXiv}, math.GT, Nov 2006{\natexlab{a}}.
\newblock URL \url{http://arxiv.org/abs/math/0611025v2}.

\bibitem[Dasbach et~al.(2006{\natexlab{b}})Dasbach, Futer, Kalfagianni, Lin,
  and Stoltzfus]{Dasbach:2006a}
O.~T. Dasbach, D.~Futer, E.~Kalfagianni, X.~Lin, and N.~W. Stoltzfus.
\newblock {The Jones polynomial and graphs on surfaces}.
\newblock \emph{arXiv}, math.GT, May 2006{\natexlab{b}}.
\newblock URL \url{http://arxiv.org/abs/math/0605571v3}.

\bibitem[Ernst and Sumners(1990)]{Ernst:1990}
C.~Ernst and D.~W. Sumners.
\newblock {A calculus for rational tangles: applications to DNA recombination}.
\newblock \emph{Math. Proc. Camb. Phil. Soc.}, 108:\penalty0 489--515, 1990.

\bibitem[Kauffman and Lopes(2007)]{Kauffman:2007}
L.~H. Kauffman and P.~Lopes.
\newblock Graded forests and rational knots.
\newblock \emph{arXiv}, math.GT, Oct 2007.
\newblock URL \url{http://arxiv.org/abs/0710.3765v1}.

\bibitem[Lickorish(1997)]{Lickorish:1997}
W.~B.~R. Lickorish.
\newblock \emph{{An Introduction to Knot Theory}}, volume 175 of \emph{Graduate
  Texts in Mathematics}.
\newblock Springer-Verlag, New York, 1997.

\bibitem[Lickorish and Thistlethwaite(1998)]{Lickorish:1998}
W.~B.~R. Lickorish and M.~B. Thistlethwaite.
\newblock Some links with non-trivial polynomials and their crossing-numbers.
\newblock \emph{Comment. Math. Helvetici}, 63:\penalty0 527--539, 1998.

\bibitem[Lowrance(2007)]{Lowrance:2007}
A.~Lowrance.
\newblock {On knot Floer width and Turaev genus}.
\newblock \emph{arXiv}, math.GT, Sep 2007.
\newblock URL \url{http://arxiv.org/abs/0709.0720v1}.

\bibitem[Manolescu(2006)]{Manolescu:2006a}
C.~Manolescu.
\newblock {An unoriented skein exact triangle for knot Floer homology}.
\newblock \emph{arXiv}, math.GT, Sep 2006.
\newblock URL \url{http://arxiv.org/abs/math/0609531v3}.

\bibitem[Manolescu and Ozsvath(2007)]{Manolescu:2007}
C.~Manolescu and P.~Ozsvath.
\newblock {On the Khovanov and knot Floer homologies of quasi-alternating
  links}.
\newblock \emph{arXiv}, math.GT, Aug 2007.
\newblock URL \url{http://arxiv.org/abs/0708.3249v2}.

\bibitem[Ozsvath and Szabo(2003)]{Ozsvath:2003}
P.~Ozsvath and Z.~Szabo.
\newblock {On the Heegaard Floer homology of branched double-covers}.
\newblock \emph{arXiv}, math.GT, Sep 2003.
\newblock URL \url{http://arxiv.org/abs/math/0309170v1}.

\end{thebibliography}
\end{document}